\theoremstyle{plain}
\newtheorem{thm}[subsection]{Theorem}
\newtheorem{lem}[subsection]{Lemma}
\newtheorem{prop}[subsection]{Proposition}
\newtheorem{cor}[subsection]{Corollary}
\newtheorem{lemma}[subsection]{Lemma}
\theoremstyle{definition}
\newtheorem{remark}[subsection]{Remark}
\def\vs{\vskip}
\def\ni{\noindent}
\begin{document}
\title[Cotangent bundle to the Grassman variety]%
{Cotangent bundle to the Grassman variety}
\author[V. Lakshmibai]{V. Lakshmibai${}^{\dag}$}
\address{Northeastern University, Boston, USA}
   \email{lakshmibai@neu.edu}
 
\thanks{${}^{\dag}$ V.~Lakshmibai was partially supported by
NSA grant H98230-11-1-0197,  NSF grant 0652386}

 \maketitle

\begin{abstract}%
We show that there is  an affine Schubert variety in the infinite
dimensional partial Flag variety (associated to the two-step
parabolic subgroup of the Kac-Moody group ${\widehat{SL_n}}$,
corresponding to omitting $\alpha_0,\alpha_d$) which is a natural
compactification of the cotangent bundle to the Grassmann variety.
\end{abstract}
\section{Introduction} Let the base field $K$ be the field of
complex numbers. Consider a cyclic quiver with $h$ vertices and
dimension vector ${\underline d}=(d_1,\cdots, d_h)$:
$$\xymatrix{ 1 \ar[r] & 2 \ar[r] & \cdots & h-2 \ar[r] &
h-1\ar[dll]\\ &&h \ar[ull] & & }$$ Denote $V_i=K^{d_i}$. Let
$$Z=Hom(V_1,V_{2})\times \cdots\times Hom(V_h,V_{1}),\
GL_{{\underline d}}=\prod_{1\le i\le h}\,GL{(V_i)}$$ We have a
natural action of $GL_{{\underline d}}$ on $Z$: for
$g=(g_1,\cdots,g_h)\in GL_{{\underline d}},f=(f_1,\cdots,f_h)\in
Z$,
$$g\cdot f=(g_2f_1g_1^{-1},g_3f_2g_2^{-1},\cdots,g_1f_hg_h^{-1})$$
Let $$\mathcal{N}=\{(f_1,\cdots,f_h)\in Z\,|\,f_h\circ
f_{h-1}\circ\cdots\circ f_1:V_1\rightarrow V_1{\mathrm{\ is\
nilpotent}}\}$$

Note that $f_h\circ f_{h-1}\circ\cdots\circ f_1:V_1\rightarrow
V_1$ is equivalent to

\ni $f_{i-1}\circ f_{i-2}\circ\cdots\circ f_1\circ f_h
\circ\cdots\circ f_{i+1} f_i:V_i\rightarrow V_i$ is nilpotent.
Clearly $\mathcal{N}$ is $GL_{{\underline d}}$-stable. Lusztig
(cf.\cite{lus}) has shown that an orbit closure in $\mathcal{N}$
is canonically isomorphic to an open subset of a Schubert variety
in ${\widehat{SL}}_n/Q$, where $n=\sum_{1\le i\le h}\, d_i$, and
$Q$ is the parabolic subgroup of ${\widehat{SL}}_n$ corresponding
to omitting $\alpha_0,\alpha_{d_{1}},
\alpha_{d_{1}+d_{2}},\cdots,\alpha_{d_{1}+\cdots +d_{h-1}}$
($\alpha_i, 0\le i\le n-1$ being the set of simple roots for
${\widehat{SL}}_n$). Corresponding to $h=1$, we have that
$\mathcal{N}$ is in fact the variety of nilpotent elements in
$M_{d_{1},d_{1}}$, and thus the above isomorphism identifies
$\mathcal{N}$ with an open subset of a Schubert variety
$X_{\mathcal{N}}$ in ${\widehat{SL}}_n/\mathcal{P},\mathcal{P}$,
being the maximal parabolic subgroup of ${\widehat{SL}}_n$
corresponding to ``omitting" $\alpha_0$.

Let now $h=2$, and let
$$Z_0=\{(f_1,f_2)\in Z\,|\,f_2\circ f_1=0,f_1\circ f_2=0\}$$
Strickland (cf. \cite{st}) has shown that each irreducible
component of $Z_0$ is the conormal variety to a determinantal
variety in $M_{d_1,d_2}$. A determinantal variety in $M_{d_1,d_2}$
being canonically isomorphic to an open subset in a certain
Schbert variety in $G_{d_2,d_1+d_2}$ (the Grassmannian variety of
$d_2$-dimensional subspaces of $K^{d_1+d_2}$) (cf.\cite{g/p-2}),
the above two results of Lusztig and Strickland suggest a
connection between conormal varieties to Schubert varieties in the
(finite-dimensional) flag variety and the affine Schubert
varieties. This is the motivation for this article. We define a
canonical embedding of $T^*G_{d,n}$ ($G_{d,n}$ being the Grassmann
variety of $d$-dimensional subspaces of $K^n$) inside a
$\mathcal{P}$-stable affine Schubert variety $X(\kappa_d)$ inside
${\widehat{SL}}_n/\mathcal{Q}$ ($\mathcal{Q}$ being the two-step
parabolic subgroup of ${\widehat{SL_n}}$, corresponding to
omitting $\alpha_0,\alpha_d$), and show that $X(\kappa_d)$ gives a
natural compactification of $T^*G_{d,n}$ (cf. Theorem \ref{ctgt}).

The fact that the affine Schubert variety $X(\kappa_d)$ is a natural
compactification of $T^*G_{d,n}$, suggests similar compactifications
for conormal varieties to Schubert varieties in $G_{d,n}$ (by
suitable affine Schubert varieties in
${\widehat{SL}}_n/\mathcal{Q}, \mathcal{Q}$ being as above). The details will appear in a subsequent paper.

The sections are organized as follows. In \S \ref{prelim}, we fix
notation and recall \emph{affine Schubert varieties}. In \S
\ref{elt}, we introduce the element $\kappa_d$ (in
${\widehat{W}}$, the affine Weyl group), and prove some properties
of $\kappa_d$. In \S \ref{main}, we prove one crucial result on
$\kappa_d$ needed for proving the embedding of $T^*G_{d,n}$ inside
${\widehat{SL}}_n/\mathcal{Q}$. In \S \ref{bundle}, we present the
results for $T^*G_{d,n}$.

\vs.2cm\ni\textbf{Acknowledgement:}  The author expresses her thanks to Littelmann and Seshadri for some useful discussion.
\section{Affine Schubert varieties}\label{prelim} Let $K=\mathbb{C}, F=K((t))$,
the field of Laurent series, $A^{\pm}=K[[t^{\pm 1}]]$. Let $G$ be a
semi-simple algebraic group over $K$, $T$ a maximal torus in $G$,
$B$, a Borel subgroup, $B\supset T$ and let $B^-$ be the Borel
subgroup opposite to $B$. Let $\mathcal{G}=G(F)$. The natural
inclusions
$$K\hookrightarrow A^{\pm}\hookrightarrow F$$ induce inclusions
$$G\hookrightarrow G(A^{\pm})\hookrightarrow\mathcal{G}$$ The
natural projections
$$A^{\pm}\rightarrow K,t^{\pm 1}\mapsto 0$$
induce homomorphisms
$$\pi^{\pm}:G(A^{\pm})\rightarrow G$$ Let
$$\mathcal{B}=(\pi^+)^{-1}(B),\mathcal{B}^-=(\pi^-)^{-1}(B^-)$$ Let
${\widehat W}=N(K[t,t^{-1}])/T$, the \emph{affine Weyl group} of
$G$ (here, $N$ is the normalizer of $T$ in $G$); ${\widehat W}$ is
a Coxeter group on $\ell+1$ generators $\{s_0,s_1\cdots s_\ell\}$,
where $\ell$ is the rank of $G$, and $\{s_0,s_1\cdots s_\ell\}$ is
the set of reflections with respect to the simple roots,
$\{\alpha_0,\alpha_1\cdots\alpha_\ell\}$, of $\mathcal{G}$.

\vs.2cm\ni\textbf{Bruhat decomposition:} We have
$$G(F)=\dot\cup_{w\in{\widehat W}}\mathcal{B}w\mathcal{B},
G(F)/\mathcal{B}=\dot\cup_{w\in{\widehat
W}}\mathcal{B}w\mathcal{B}(mod\,\mathcal{B})$$ For $w\in{\widehat
W}$, let $X(w)$ be the \emph{affine Schubert variety} in
$G(F)/\mathcal{B}$:
$$X(w)=\dot\cup_{\tau\le w}\mathcal{B}\tau\mathcal{B}(mod\,
\mathcal{B})$$ It is a projective variety of dimension $\ell(w)$.

\subsection{Affine Flag variety,  Affine Grassmannian:}

Let $G=SL(n)$,

\ni $\mathcal{G}=G(F), G_0=G(A^+)$. Then $\mathcal{G}/\mathcal{B}$
is the \emph{affine Flag variety}, and $\mathcal{G}/G_0$ is the
\emph{affine Grassmannian}. Further,
$$\mathcal{G}/G_0=\dot\cup_{w\in{\widehat W}^{G_0}}
\mathcal{B}w\,G_0 (mod\,G_0)$$ where ${\widehat W}^{G_0}$ is the
set of minimal representatives in ${\widehat W}$ of ${\widehat
W}/W_{G_0}$.

\ni Denote $A^{+}(=k[[t]])$ by just $A$. Let $${\widehat
{Gr(n)}}=\{A\texttt{-lattices\ in\ } F^n \}$$ Here, by an
$A$-lattice in $F^n$, we mean a free $A$-submodule of $F^n$ of
rank $n$. Let $E$ be the standard lattice, namely, the $A$-span of
the standard $F$-basis $\{e_1,\cdots,e_n\}$ for $F^n$. For $V\in
{\widehat {Gr(n)}}$, define
$$\mathrm{vdim}(V):=dim_K(V/V\cap E)-dim_K(E/V\cap E)$$
 One refers to $\mathrm{vdim}(V)$ as the
 \emph{virtual dimension of $V$}. For $j\in \mathbb{Z}$ denote
$${\widehat {Gr_j(n)}}=\{V\in{\widehat {Gr(n)}}\,|\,
\mathrm{vdim}(V)=j\}$$ Then ${\widehat {Gr_j(n)}},j\in\mathbb{Z}$
give the connected components of ${\widehat {Gr(n)}}$. We have a
transitive action of $GL_n(F)$ on ${\widehat {Gr(n)}}$ with
$GL_n(A)$ as the stabilizer of the standard lattice $E$. Further,
let $\mathcal{G}_0$ be the subgroup of $GL_n(F)$, defined as,
$$\mathcal{G}_0=\{g\in GL_n(F)\,|\,\mathrm{ord(det\, }g)=0\}$$
(here, for a $f\in F$, say $f=\sum\,a_it^i$, \emph{order f} is the
smallest $r$ such that $a_r\ne 0$). Then $\mathcal{G}_0$ acts
transitively on ${\widehat {Gr_0(n)}}$ with $GL_n(A)$ as the
stabilizer of the standard lattice $E$. Also, we have a transitive
action of $SL_n(F)$ on ${\widehat {Gr_0(n)}}$ with $SL_n(A)$ as
the stabilizer of the standard lattice $E$. Thus we obtain the
identifications:
$$\begin{gathered}GL_n(F)/GL_n(A)\simeq{\widehat {Gr(n)}}\\
\mathcal{G}_0/GL_n(A)\simeq{\widehat {Gr_0(n)}},
SL_n(F)/SL_n(A)\simeq{\widehat {Gr_0(n)}}
\end{gathered}\leqno{(*)}$$ In particular, we obtain
$$\mathcal{G}_0/GL_n(A)\simeq SL_n(F)/SL_n(A)\leqno{(**)}$$

\subsection{Generators for ${\widehat W}$:}\label{gen} Following
the notation in \cite{kac}, we shall work with the set of
generators for ${\widehat W}$ given by $\{s_0, s_1,\cdots,
s_{n-1}\}$, where $s_i, 0\le i\le n-1$ are the reflections with
respect to $\alpha_i, 0\le i\le n-1$. Note that $\{\alpha_i, 1\le
i\le n-1\}$ is simply the set of simple roots of $SL_n$ (with
respect to the Borel subgroup $B$); also note that
$\alpha_0=\delta-\theta$, where $\theta$ is the highest root of
the (finite) Type $\mathbf{A}_{n-1}$ with simple roots
$\{\alpha_1,\cdots\alpha_{n-1}\}$:
$$\theta=\alpha_1+\cdots+\alpha_{n-1}$$ We have the following canonical lifts (in $\mathcal{G}$) for $s_i,
0\le i\le n-1$; for $1\le i\le n-1$, $s_i$ is the permutation
matrix $(a_{rs})$, with $a_{jj}=1,j\ne i,i+1,\  a_{i\,i+1}=1,
a_{i+1\,i}=-1$, and all other entries are $0$. A lift for $s_0$ is
given by
$$\begin{pmatrix}
0&0&\cdots & t^{-1}\\
0&1&\cdots &0\\
\vdots & \vdots & \vdots & \vdots\\
0&\cdots &1 &0\\
-t &0&0&0
\end{pmatrix}$$
\section{The element $\kappa_d$}\label{elt} 

Let $P$ be a parabolic subgroup, and $W_P$ the Weyl group of $P$. Let $R_P$ be the set of roots of $P$, and $S_P$ the set of simple roots of $P$, The Schbert varieties in $G/P$ are indexed by $W/W_P$. We gather some well-known facts on $W^P$, the set of minimal representatives in $W$ of the elements of $W/W_P$.  

For $wP\in W/W_P$, there exists a (unique) representative $w_{min}\in W$ with the following properties:

\ni{\textbf {Fact 1:}} Among all the representatives in $W$ for $wP$, $w_{min}$ is the unique element of smallest length.

\ni{\textbf {Fact 2:}} $w_{min}(\alpha)>0, \forall \alpha\in S_P$.

\ni{\textbf {Fact 3:}} dim$\,X(w)=l(w_{min})$ ($X(w)$ is the Schubert variety in $G/P$, corresponding to $w$).

\ni{\textbf {Fact 4:}}  Let  $P_\alpha$ be the parabolic subgroup with $\{\alpha\}$ as the associated set of simple roots. Then $X(w)$ is stable for left multiplication by $P_\alpha$ if and only if $s_\alpha w<w\, (mod\, W_P)$. More generally, given a parabolic sub group $Q$,  $X(w)$ is stable for left multiplication by $Q$ if and only if $s_\alpha w<w\, (mod \,W_P), \forall\alpha\in S_Q$.

\begin{remark} 
These results hold for Kac-Moody groups also.
\end{remark}

 Denote the Weyl group of $SL(n)$ by $W$
(note that $W$ is just the symmetric group $S_n$). Consider the
Type $\mathbf{A}_{n-1}$ Dynkin diagram with simple roots
$\alpha_1,\cdots,\alpha_{n-1}$, namely
$$\xymatrix{ \alpha_1 \ar [r]  & \cdots \ar [r] & \alpha_{n-1}}\leqno{(A)}$$
 Let
$W_{P_d}$ be the Weyl group of $P_d$, the maximal parabolic
subgroup of $SL(n)$ corresponding to omitting the simple root
$\alpha_d$. Note that $P_d$ consists of $\{(a_{ij})\in SL(n)\}$
such that $a_{ij},j\le d<i\le n$ are $0$. Note also that
$W_{P_d}=S_d\times S_{n-d}$. We may suppose $d\le n-d$, since
$G/P_d\cong G/P_{n-d}$. Denote the set of minimal representatives
of $W/W_{P_d}$ by $W^{P_d}$. Then the Schubert varieties in
$G_{d,n}(\cong G/P_d)$ are indexed by $W^{P_d}$.
\subsection{The elements $w_1, w_2$}\label{elts} The unique maximal element $w_0^{P_d}$
corresponding to $G_{d,n}$ has the following reduced expression
$$w_0^{P_d}=u_1u_2\cdots u_d,\  u_k=s_{n-d+k-1}s_{n-d+k-2}\cdots
s_k$$ Let us denote $w_0^{P_d}$ by just $w_1$. Similarly,
considering the Type $\mathbf{A}_{n-1}$ Dynkin diagram with simple
roots

\ni
$\alpha_{d-1},\alpha_{d-2},\cdots,\alpha_1,\alpha_0,\alpha_{n-1},
\alpha_{n-2},\cdots,\alpha_{d+1}$ (taken in that order),

\ni namely,
$$\xymatrix{ \alpha_{d-1} \ar[r]  & \cdots \ar[r] & \alpha_1 \ar[r]
& \alpha_0  \ar[r]& \alpha_{n-1} \ar[r] &\cdots  \ar[r]&
\alpha_{d+1}}\leqno{(B)}$$ the unique maximal element $w_2$ in the
set of minimal representatives corresponding to omitting
$\alpha_0$ has the following reduced expression
$$v_{d-1}v_{d-2}\cdots v_1v_0,\  v_k=s_{d+k+1}s_{d+k+2}\cdots
s_{n-1}s_0s_1s_2\cdots s_k, 0\le k\le d-1$$ Note that for $0\le
k\le d-2$, we have, $d+k+1\le n-1$ (since $d\le n-d$). For $k=d-1$
again, $d+k+1\le n-1$, if $d<n-d$; if $d=n-d$, then
$v_{d-1}=s_0s_1s_2\cdots s_{d-1}$, and in this case,
$s_{d+k+1}=(s_{2d}=s_n)$ is to be understood as $s_0$.

In the sequel, we shall refer to the two systems as system A,
system B, respectively. We shall index system B as
$\alpha'_1,\cdots,\alpha'_{n-1}$, and denote the corresponding
reflections by $s'_1,\cdots, s'_{n-1}$; we have,
$$s'_{d-k}=s_k, 1\le k\le d-1,\ s'_{n+d-\ell}=s_l,
d+1\le \ell\le n-1$$

We define $\kappa_d=w_1w_2$. Then using the lifts for $s_i, 0\le
i\le n-1$, as described in \S \ref{gen}, it is easily checked that
$\kappa_d$ is the diagonal matrix with three diagonal blocks:
$$\kappa_d=diag([tI_{d}],[I_{n-2d}],[t^{-1}I_{d}])$$ (here, $I_r$ denotes the identity $r\times r$ matrix).

 Let $P_d$ be as above. Let
$P'_d$ be the maximal parabolic subgroup of system B,
corresponding to omitting $\alpha'_d(=\alpha_0)$.

 We now prove three properties of $\kappa_d$. In the
discussion below, we shall have the following notation:

$\bullet$ $R$ (respectively, $R^+$) is the root system
(respectively, the system of positive roots) of $G$

$\bullet$ $R_{P_{d}}$ (respectively, $R^+_{P_{d}}$) is the root
system (respectively, the system of positive roots) of ${P_{d}}$

$\bullet$ $W_{P_{d}}$ is the Weyl group of ${P_{d}}$

$\bullet$ $W^{P_{d}}$ is the set of minimal representatives of
$W/W_{P_{d}}$

$\bullet$ ${\widehat{W}}^{\mathcal{Q}}$ is the set of minimal
representatives of ${\widehat{W}}/W_{\mathcal{Q}}, {\mathcal{Q}}$
being the two-step parabolic subgroup of ${\mathcal{G}}$,
corresponding to omitting $\alpha_0,\alpha_d$.

We will also need the description of the set of
 positive real roots of
$\mathcal{G}$ (cf. \cite{kac}): the set of positive real roots is
given by

\ni $\{q\delta+\beta,q> 0, \beta\in R\}\dot\cup\{R^+\}$, where,
recall that $\delta=\alpha_0+\theta$,
$\theta=(\alpha_1+\cdots+\alpha_{n-1})$ is the highest root (in
$R^+$).

\ni We have the braid relations
$$\begin{gathered}s_is_{i+1}s_{i}=s_{i+1}s_{i}s_{i+1},
1\le i\le n-2,\\
s_0s_{1}s_{0}=s_{1}s_{0}s_{1},\
s_0s_{n-1}s_{0}=s_{n-1}s_{0}s_{n-1}\end{gathered}$$ and the
commuting relations:
$$s_is_j=s_js_i, 1\le i,j\le n-1, |i-j|>1,\ \  s_0s_i=s_is_0,
2\le i\le n-2$$

\subsection{I. A reduced expression for $\kappa_d$:}\label{Red} In this
subsection, we shall prove that the expression $\kappa_d=w_1w_2$,
with the reduced expressions for $w_1,w_2$ as in \S \ref{elts}, is reduced. We
prove the following more general Lemma.
\begin{lem}\label{red}
Let $y_1,y_2$ be in $W^{P_d}, W^{P'_d}$ respectively, and let
$y_1=s_{i_1}\cdots s_{i_\ell},y_2=s'_{j_1}\cdots s'_{j_t}$ be
reduced expressions for $y_1, y_2$. Then

\ni $s_{i_1}\cdots s_{i_\ell}s'_{j_1}\cdots s'_{j_t}$ is a reduced
expression for $y_1y_2$.
\end{lem}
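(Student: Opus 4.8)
The plan is to reduce the statement to the length‑additivity identity $\ell(y_1y_2)=\ell(y_1)+\ell(y_2)$, since a concatenation of two reduced words represents $y_1y_2$ and is itself reduced precisely when the lengths add. For $w\in\widehat{W}$ write $N(w)=\{\beta\in\Phi^+ : w(\beta)<0\}$ for its inversion set inside the positive real roots $\Phi^+=R^+\,\dot\cup\,\{q\delta+\beta : q>0,\ \beta\in R\}$, so that $\ell(w)=|N(w)|$. I would use the standard cocycle computation, which gives
$$\ell(y_1y_2)=\ell(y_1)+\ell(y_2)-2\,|N(y_1)\cap N(y_2^{-1})|,$$
so that it suffices to prove $N(y_1)\cap N(y_2^{-1})=\emptyset$.

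First I would describe $N(y_1)$. Since $y_1\in W^{P_d}$ lies in the finite Weyl group $W$, which fixes $\delta$ and permutes $R$, every positive root $q\delta+\beta$ with $q>0$ stays positive under $y_1$; hence $N(y_1)\subseteq R^+$. Moreover Fact 2 (minimality of $y_1$) gives $y_1(R^+_{P_d})\subseteq R^+$, so $N(y_1)\subseteq R^+\setminus R^+_{P_d}$. Writing the type $\mathbf{A}_{n-1}$ roots as $\e_i-\e_j$ with $\alpha_i=\e_i-\e_{i+1}$, this means $N(y_1)$ consists precisely of the ``straddling'' roots $\e_a-\e_b$ with $a\le d<b$ (all at level $q=0$).

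The crux is to pin down $R_B^+$, the positive roots of system B, and to locate $N(y_2^{-1})$ inside it. Because the group $W_B$ generated by $s'_1,\dots,s'_{n-1}$ is a parabolic subgroup of $\widehat{W}$, any reduced word for $y_2^{-1}$ in these generators crosses only roots of $R_B$, whence $N(y_2^{-1})\subseteq R_B^+$. I would then compute $R_B^+$ directly from the chain (B): summing consecutive simple roots and using $\alpha_0=\delta-\theta$, one finds that $R_B^+$ splits into the level‑zero roots $\{\e_a-\e_b : a<b\le d\}\cup\{\e_a-\e_b : d<a<b\}$, which are exactly the ``same‑side'' roots making up $R^+_{P_d}$, together with the level‑one roots $\{\delta+\e_b-\e_a : a\le d<b\}$. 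The expected main obstacle is precisely this computation of $R_B^+$: one must track carefully how passing through the node $\alpha_0$ injects a copy of $\delta$ and converts the finite part into a negative (straddling) finite root. Granting it, the intersection is empty, since $N(y_1)$ consists of straddling level‑zero roots while the level‑zero part of $R_B^+\supseteq N(y_2^{-1})$ consists only of same‑side roots, its remaining roots sitting at level one. This disjointness yields $\ell(y_1y_2)=\ell(y_1)+\ell(y_2)$, and hence the lemma; note that only the minimality of $y_1$ (not of $y_2$) is actually needed to force $N(y_1)$ off of $R_B^+$.
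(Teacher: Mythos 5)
Your argument is correct. The formula $\ell(y_1y_2)=\ell(y_1)+\ell(y_2)-2\,|N(y_1)\cap N(y_2^{-1})|$ is the standard cocycle identity, the inclusion $N(y_1)\subseteq R^+\setminus R^+_{P_d}$ follows as you say from Fact 2 together with $y_1(\delta)=\delta$, the containment $N(y_2^{-1})\subseteq R_B^+$ is the standard fact about standard parabolic subgroups, and your computation of $R_B^+$ (level-zero part equal to $R^+_{P_d}$, the rest of the form $\delta-\beta$ with $\beta$ a positive straddling root) is right; the disjointness and hence length-additivity follow. (One small imprecision: you only get $N(y_1)\subseteq\{\text{straddling roots}\}$, not equality, but the inclusion is all you use.)

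The paper proves the lemma differently in form though not in substance: it writes the concatenated word as $r_1\cdots r_m$ and verifies directly that each prefix satisfies $r_1\cdots r_{j-1}(\beta_j)>0$, which is the exchange-condition criterion for a word to be reduced. The two key inputs are identical in both arguments --- that $y_1$ preserves positivity of any real root with positive $\delta$-coefficient, and that the level-zero roots arising from system B lie in $R^+_{P_d}$ and are therefore kept positive by $y_1\in W^{P_d}$. What your packaging buys is twofold: the explicit determination of $R_B^+$, which the paper leaves implicit (in Subcase 2(b) it simply asserts that $\gamma\in R^+_{P_d}$ without spelling out why a level-zero positive root of system B must be a non-straddling root), and the observation that minimality of $y_2$ is never used, so the lemma holds for arbitrary $y_2$ in the parabolic subgroup generated by $s'_1,\dots,s'_{n-1}$. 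The paper's prefix-by-prefix check, on the other hand, avoids invoking the cocycle formula and stays entirely within the elementary positivity criterion. Both are complete proofs.
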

\begin{proof} First observe that any reduced
expression for $y_1$ (respectively, $y_2$) is a right-end segment
of the reduced expression for $w_1$ (respectively, $w_2$) as
described in \S \ref{elts}, and hence $$s_{i_\ell}=s_d,\
s'_{j_t}=s_0 \leqno{(*)}$$ (see the descriptions of the reduced
expressions for $w_1,w_2$ as given in \S \ref{elts}).
 Let us denote the word $s_{i_1}\cdots
s_{i_\ell}s'_{j_1}\cdots s'_{j_t}$ as just $r_1\cdots r_{\ell+t}$, 
where $r_j, 1\le j\le \ell+t$ equals the reflection
$s_{\beta_j},\beta_j$ being either an $\alpha_a$ or an
$\alpha'_b$; let us denote $\ell+t$ by $m$. We shall show that
$r_1\cdots r_{j-1}(\beta_j)>0$, i.e., a positive root, for any
left-end sub word $r_1\cdots r_{j-1},2\le j\le m$, from which the
required result will follow. This is clear for $2\le j\le \ell$,
since $r_1\cdots r_{\ell}(=y_2)$ is reduced. Let us then consider
$r_1\cdots r_{j-1}(\beta_j),j\ge \ell+1$. We now divide the proof
into the following two cases:

\ni\textbf{Case 1:} Let $j=\ell+1$. Then $\beta_j=\alpha_k$, for
some $k, 0\le k\le n-1, k\ne d$. If $k>0$, then $\alpha_k\in
S_{P_d}$ (the system of simple roots of $P_d$), and hence
$r_1\cdots r_{j-1}(\beta_j)=y_1(\alpha_k)>0$, since $y_1\in
W^{P_d}$, and the result follows. If $k=0$, then $r_1\cdots
r_{j-1}(\beta_j)=y_1(\alpha_0)$ is clearly a positive root, since
$y_1$ being in $W^{P_d}$, the reduced expression for $y_1$ does
not involve $s_0$, and the result follows.

\ni\textbf{Case 2:} Let $j\ge\ell+2$. Now $r_{\ell+1}\cdots
r_{j-1}(\beta_j)>0$, a positive (real) root (since
$r_{\ell+1}\cdots r_{j}$ being a left segment of the reduced
expression $r_{\ell+1}\cdots r_{m}$ is reduced); hence,
$r_{\ell+1}\cdots r_{j-1}(\beta_j)$ is of the form
$q\delta+\gamma,q\ge 0,\gamma\in R$. We now divide the proof into
the following two subcases:

\ni\textbf{Subcase 2(a):} Let $q>0$. Then, $r_{1}\cdots
r_{j-1}(\beta_j)=r_1\cdots
r_{\ell}(q\delta+\gamma)(=y_1(q\delta+\gamma))$ is of the form
$q\delta+\epsilon$, where $\epsilon\in R$ (since $y_1$ does not
involve $s_0$), and the result follows.

\ni\textbf{Subcase 2(b):} Let $q=0$. Then $\gamma=r_{\ell+1}\cdots
r_{j-1}(\beta_j)$ is a positive root; further, it is in
$R^+_{P_d}$. Hence $r_{1}\cdots r_{j-1}(\beta_j)=r_1\cdots
r_{\ell}(\gamma)(=y_1(\gamma))>0$ (since, $y_1\in W^{P_d}$), and
the result follows.

This completes the proof of the Lemma.
\end{proof}
\subsection{II. Minimal representative property for $\kappa_d$:}\label{Min} In
this subsection, we shall prove that $\kappa_d$ is in
${\widehat{W}}^{\mathcal{Q}}$ (where recall that $\mathcal{Q}$ is
the two-step parabolic subgroup of $\mathcal{G}$ corresponding to
omitting $\alpha_0,\alpha_d$). We prove the following more general
Lemma.
\begin{lem}\label{min}
Let $y_1,y_2$ be in $W^{P_d}, W^{P'_d}$ respectively. Then
$y_1y_2$ is in ${\widehat{W}}^{\mathcal{Q}}$.
\end{lem}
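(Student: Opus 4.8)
The goal is to show $y_1 y_2 \in \widehat{W}^{\mathcal{Q}}$, i.e.\ that $y_1 y_2$ is the minimal-length representative of its coset modulo $W_{\mathcal{Q}}$. By Fact 2 recalled above, this is equivalent to showing $(y_1 y_2)(\alpha) > 0$ for every simple root $\alpha \in S_{\mathcal{Q}}$, where $S_{\mathcal{Q}} = \{\alpha_i : 1 \le i \le n-1,\ i \ne d\}$ is the set of simple roots of $\mathcal{Q}$ (all the affine simple roots except $\alpha_0$ and $\alpha_d$). So the plan is to run through each such $\alpha_i$ and verify that $y_1 y_2$ sends it to a positive real root of $\mathcal{G}$.

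The key tool is the reducedness established in Lemma \ref{red}, which lets me compute $(y_1 y_2)(\alpha_i)$ by applying $y_2$ first and then $y_1$, tracking whether the intermediate root stays positive. I would organize the computation exactly as in the proof of Lemma \ref{red}: first apply $y_2$ to $\alpha_i$, obtaining a root of the form $q\delta + \gamma$ with $q \ge 0$, and then apply $y_1$. The crucial structural fact is that $y_1 \in W^{P_d}$ has a reduced expression not involving $s_0$, so $y_1$ fixes $\delta$ and acts on the finite root system $R$; this means $y_1(q\delta + \gamma) = q\delta + y_1(\gamma)$. When $q > 0$ the result is automatically positive regardless of $y_1(\gamma)$, exactly as in Subcase 2(a) of the previous lemma. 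The only delicate case is $q = 0$, where I must show $\gamma = y_2(\alpha_i)$ lands in $R^+_{P_d}$ so that $y_1(\gamma) > 0$ follows from $y_1 \in W^{P_d}$.

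The heart of the matter, and the step I expect to be the main obstacle, is therefore analyzing $y_2(\alpha_i)$ for $\alpha_i \in S_{\mathcal{Q}}$ and confirming that whenever $y_2(\alpha_i)$ has $\delta$-coefficient zero it actually lies in $R^+_{P_d}$ (not merely in $R^+$). Here I would exploit the fact that $y_2 \in W^{P'_d}$, so $y_2$ sends the simple roots of $P'_d$ in system B to positive roots (Fact 2 for system B). The simple roots $\alpha_i$ with $i \ne 0, d$ are precisely the simple roots of system B other than $\alpha'_d = \alpha_0$, via the reindexing $s'_{d-k} = s_k$ and $s'_{n+d-\ell} = s_\ell$ recorded in \S\ref{elts}; hence each such $\alpha_i$ is a simple root of $P'_d$, so $y_2(\alpha_i) > 0$ is a positive real root to begin with. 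The remaining work is to rule out a positive $\delta$-contribution collapsing in a way that puts $\gamma$ into $R^+ \setminus R^+_{P_d}$; I would handle this by a direct case check on the index $i$ (distinguishing $i < d$ from $i > d$) using the explicit block form $\kappa_d = \mathrm{diag}([tI_d],[I_{n-2d}],[t^{-1}I_d])$ to read off the sign and $\delta$-degree of $(y_1 y_2)(\alpha_i)$ concretely. Once each simple root in $S_{\mathcal{Q}}$ is verified to map to a positive real root, Fact 2 gives $y_1 y_2 \in \widehat{W}^{\mathcal{Q}}$ and the proof is complete.
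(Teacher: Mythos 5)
Your skeleton matches the paper's proof: show $(y_1y_2)(\alpha_i)>0$ for $i\ne 0,d$; note that each such $\alpha_i$ is a simple root of $P'_d$ under the reindexing, so $y_2(\alpha_i)=q\delta+\gamma$ is a positive real root with $q\ge 0$; dispose of $q>0$ because $y_1$ does not involve $s_0$ and hence fixes $\delta$; and reduce everything to the one delicate point, namely that in the $q=0$ case one must know $\gamma\in R^+_{P_d}$, not merely $\gamma\in R^+$, before the hypothesis $y_1\in W^{P_d}$ can be applied. Up to there you are on the paper's track.

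The gap is in how you propose to settle that last point. You plan ``a direct case check on the index $i$ \dots using the explicit block form $\kappa_d=\mathrm{diag}([tI_d],[I_{n-2d}],[t^{-1}I_d])$.'' That matrix is the lift of the one element $w_1w_2$, whereas the lemma is stated for arbitrary $y_1\in W^{P_d}$ and $y_2\in W^{P'_d}$; a computation reading off the action of $\kappa_d$ cannot prove the general statement, and even restricted to $\kappa_d$ it replaces a structural fact by an unexecuted calculation. The missing observation is a one-liner, and it is exactly what the paper uses: $y_2$ lies in the subgroup of $\widehat{W}$ generated by $\{s_j : j\ne d\}$, i.e.\ its reduced expression does not involve $s_d$, so $y_2(\alpha_i)-\alpha_i$ is supported on the simple roots $\alpha_j$ with $j\ne d$. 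Hence when the $\delta$-coefficient (equivalently the $\alpha_0$-coefficient) $q$ is zero, $\gamma=y_2(\alpha_i)$ is a positive root with no $\alpha_d$-component, i.e.\ $\gamma\in R^+_{P_d}$, and $y_1(\gamma)>0$ follows from $y_1\in W^{P_d}$. A minor further remark: you do not need Lemma \ref{red} to ``compute $(y_1y_2)(\alpha_i)$ by applying $y_2$ first and then $y_1$'' --- that is just how a product of Weyl group elements acts on a root; reducedness plays no role in this lemma, and indeed the paper's proof never invokes it.
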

\begin{proof}
We shall show that $y_1y_2(\alpha_i)>0, i\ne 0,d$, from which the
required result will follow. Let $y_2(\alpha_i)=\beta$. Then
$\beta>0$, a positive real root (since $y_2\in W^{P'_d}$), say,
$\beta=q\delta+\gamma,q\ge 0, \gamma\in R $. We now divide the
proof into the following two cases:

\ni\textbf{Case 1:} Let $q>0$. Then
$y_1y_2(\alpha_i)=y_1(q\delta+\gamma)$. Now $y_1(q\delta+\gamma)$
is of the form $q\delta+\epsilon$, where $\epsilon\in R$ (since
$y_1$ does not involve $s_0$). Hence $y_1y_2(\alpha_i)>0$.

\ni\textbf{Case 2:} Let $q=0$. Then
$y_1y_2(\alpha_i)=y_1(\gamma)$. Now $\gamma=y_2(\alpha_i)$ is a
positive root; further, it is in $R^+_{P_d}$ (since $y_2$ does not
involve $s_d$). Hence $y_1(\gamma)>0$ (since $y_1\in W^{P_d}$),
and therefore $y_1y_2(\alpha_i)(=y_1(\gamma))>0$, as required.

This completes the proof of the Lemma.
\end{proof}

Combining Lemmas \ref{red}, \ref{min}, we obtain the following

\begin{cor}\label{dim}
For the Schubert variety $X(\kappa_d)$ in
$\mathcal{G}/\mathcal{Q}$, we have, dim$\,X(\kappa_d)=2d(n-d)$ (=2
dim$\,G_{d,n}$).
\end{cor}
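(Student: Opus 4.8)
The plan is to reduce the dimension computation entirely to the length of $\kappa_d$, using the two lemmas just proved together with Fact 3. First I would invoke Lemma \ref{min} with $y_1=w_1,\,y_2=w_2$ to conclude that $\kappa_d=w_1w_2$ lies in ${\widehat{W}}^{\mathcal{Q}}$, i.e.\ $\kappa_d$ is already the minimal representative of its own coset modulo $W_{\mathcal{Q}}$. By Fact 3 this gives $\dim X(\kappa_d)=\ell(\kappa_d)$, so it suffices to compute the length of $\kappa_d$.

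Next, applying Lemma \ref{red} (again with $y_1=w_1,\,y_2=w_2$) shows that the concatenation of the reduced expressions for $w_1$ and $w_2$ given in \S\ref{elts} is itself a reduced expression for $\kappa_d$; hence $\ell(\kappa_d)=\ell(w_1)+\ell(w_2)$.

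Then I would compute the two lengths directly from those reduced expressions. For $w_1=u_1\cdots u_d$ with $u_k=s_{n-d+k-1}\cdots s_k$, each factor $u_k$ contributes $(n-d+k-1)-k+1=n-d$ simple reflections, so $\ell(w_1)=d(n-d)$; this is of course just $\dim G_{d,n}$, since $w_1=w_0^{P_d}$. For $w_2=v_{d-1}\cdots v_0$ with $v_k=s_{d+k+1}\cdots s_{n-1}s_0s_1\cdots s_k$, each factor $v_k$ has $(n-d-k-1)+(k+1)=n-d$ letters, so $\ell(w_2)=d(n-d)$ as well. (Equivalently, $w_2$ is the maximal element of $W^{P'_d}$, and $P'_d$ omits the $d$-th node of the system-B type $\mathbf{A}_{n-1}$ diagram, so $\ell(w_2)=\dim(G/P'_d)=d(n-d)$.) Adding gives $\ell(\kappa_d)=2d(n-d)$, whence $\dim X(\kappa_d)=2d(n-d)=2\dim G_{d,n}$.

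The argument is essentially bookkeeping once the two lemmas are in hand; the only point requiring care is confirming that the stated expressions for $w_1$ and $w_2$ really are reduced, so that the letter counts equal the lengths. This is guaranteed by the fact that they are the prescribed reduced expressions for the respective maximal minimal-coset representatives, and the factor-by-factor count is consistent with the known dimensions of the two Grassmannians. I expect no genuine obstacle beyond these verifications.
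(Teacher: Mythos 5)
Your argument is correct and is exactly what the paper intends by ``Combining Lemmas \ref{red}, \ref{min}'': Lemma \ref{min} makes $\kappa_d$ the minimal coset representative so that Fact 3 applies, Lemma \ref{red} gives $\ell(\kappa_d)=\ell(w_1)+\ell(w_2)$, and each of $w_1,w_2$ has length $d(n-d)$ by the explicit reduced expressions in \S\ref{elts}. The paper leaves these details implicit; your letter counts for $u_k$ and $v_k$ are both right.
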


\subsection{III. $G_0$-stability:}\label{sysb}  As in \S \ref{elts},
we shall index system B as $\alpha'_1,\cdots,\alpha'_{n-1}$, and
denote the corresponding reflections by $s'_1,\cdots, s'_{n-1}$;
we have,
$$s'_{d-k}=s_k, 1\le k\le d-1,\ s'_{n+d-\ell}=s_l, d+1\le \ell\le n-1$$
Let $P_d,P'_d$ be as in \S \ref{Red}.
\begin{lemma}\label{stab}
For system A, we have, \begin{enumerate} \item
$s_kw_1=w_1s_{d+k},1\le k\le n-d-1$ \item
$s_{\ell}w_1=w_1s_{\ell-(n-d)},n+1-d\le \ell\le n-1$
\end{enumerate}
\end{lemma}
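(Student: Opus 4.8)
The plan is to work entirely inside the finite symmetric group $W = S_n$: I would realize $w_1 = w_0^{P_d}$ explicitly as a permutation and then reduce both identities to the standard conjugation formula for transpositions. Since $w_1$ is the maximal element of $W^{P_d}$ (the Grassmannian coset space), it is the unique permutation that is increasing on $\{1,\dots,d\}$ and on $\{d+1,\dots,n\}$ and has maximal length $d(n-d)$; explicitly
$$w_1(i) = i + (n-d),\ 1\le i\le d,\qquad w_1(i) = i-d,\ d+1\le i\le n.$$
This can also be read directly off the reduced expression $w_1 = u_1\cdots u_d$ of \S\ref{elts}, or confirmed by induction on $d$. Inverting gives
$$w_1^{-1}(j) = j+d,\ 1\le j\le n-d,\qquad w_1^{-1}(j) = j-(n-d),\ n-d+1\le j\le n.$$

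Next I would rewrite each assertion as a conjugation statement. In $S_n$ the simple reflection $s_i$ is the transposition $(i,\,i+1)$, and for any $w$ one has $w^{-1}s_i w = (w^{-1}(i),\, w^{-1}(i+1))$. Thus (1) is equivalent to $w_1^{-1}s_k w_1 = s_{d+k}$ and (2) to $w_1^{-1}s_\ell w_1 = s_{\ell-(n-d)}$.

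Then I would verify these by evaluating $w_1^{-1}$ on the relevant indices, the only point needing care being that the stated ranges are chosen precisely so that both endpoints of each transposition land in a single block on which $w_1^{-1}$ is given by one linear formula. For (1), the bound $1\le k\le n-d-1$ forces $k,\,k+1\in\{1,\dots,n-d\}$, so $w_1^{-1}(k)=k+d$ and $w_1^{-1}(k+1)=k+d+1$, whence $w_1^{-1}s_k w_1 = (k+d,\,k+d+1) = s_{d+k}$. For (2), the bound $n+1-d\le\ell\le n-1$ forces $\ell,\,\ell+1\in\{n-d+1,\dots,n\}$, so $w_1^{-1}(\ell)=\ell-(n-d)$ and $w_1^{-1}(\ell+1)=\ell-(n-d)+1$, whence $w_1^{-1}s_\ell w_1 = s_{\ell-(n-d)}$.

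The only genuine content is the first step, pinning down the permutation $w_1$; everything after is bookkeeping governed by the index ranges. If one prefers to avoid invoking the known form of the maximal Grassmannian element, the same identities follow by induction, using the commuting relations $s_is_j=s_js_i$ for $|i-j|>1$ to slide $s_k$ (respectively $s_\ell$) through the reduced word $u_1\cdots u_d$; but the explicit permutation computation is the most transparent route, so I expect no real obstacle beyond correctly recording $w_1$ and its inverse.
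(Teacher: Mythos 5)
Your proof is correct and follows essentially the same route as the paper: both identify $w_1$ explicitly as the permutation sending $i\mapsto i+(n-d)$ for $i\le d$ and $i\mapsto i-d$ for $i>d$, and then apply the standard conjugation formula for transpositions (the paper phrases it as $s_{(i,j)}w = ws_{(k,l)}$ with $i=a_k$, $j=a_l$, which is your $w^{-1}s_iw=(w^{-1}(i),w^{-1}(i+1))$). The index bookkeeping matches the paper's verification that $w_1(d+k)=k$ and $w_1(\ell-(n-d))=\ell$ on the stated ranges.
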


\ni(here, $w_1$ is as in \S \ref{elts}).
\begin{proof} As an element of $S_n$, we have,
$w_1=([n-d+1,n][1,n-d])$ (here, for a pair of integers $i<j$,
$[i,j]$ denotes the set $\{i,i+1,\cdots, j\}$; also, the notation
$w=(a_1\cdots,a_n)$ for a permutation is the usual one-line
notation, namely, $w(i)=a_i, 1\le i\le n$). Given a permutation
$w=(a_1\cdots,a_n)$, and a pair of integers $i,j, 1\le i,j\le n$
let $i=a_k, j=a_l$. Then $$s_{(i,j)}w=ws_{(k,l)}$$ (here, for
$1\le a,b\le n,a\ne b$, $s_{(a,b)}$ denotes the transposition (of
switching $a$ and $b$)). The assertions 1 and 2 follow from this
and the facts that writing $w_1=(a_1\cdots,a_n)$, we have,
$$\begin{gathered}w_1(d+k)=k, 1\le k\le n-d-1,\\
 w_1(\ell-(n-d))=\ell,
n+1-d\le \ell\le n-1 \end{gathered}$$
\end{proof}

\ni As a straight forward consequence, we have similar results for
system B:
\begin{cor}\label{sysb'}
For system B, we have, \begin{enumerate} \item
$s'_kw_2=w_2s'_{d+k},1\le k\le n-d-1$ \item
$s'_{\ell}w_2=w_2s'_{\ell-(n-d)},n+1-d\le \ell\le n-1$
\end{enumerate}
\end{cor}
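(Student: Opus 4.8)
The plan is to deduce Corollary \ref{sysb'} directly from Lemma \ref{stab} by exploiting the dictionary between system A and system B recorded just before the statement, namely the relations $s'_{d-k}=s_k$ for $1\le k\le d-1$ and $s'_{n+d-\ell}=s_\ell$ for $d+1\le \ell\le n-1$. The essential observation is that $w_2$ is, structurally within system B, exactly what $w_1$ is within system A: both are the maximal minimal-length representative for the Grassmannian obtained by omitting the ``middle'' simple root of a type $\mathbf A_{n-1}$ diagram (in system B, that omitted root is $\alpha'_d=\alpha_0$). Since the braid/commuting combinatorics underlying Lemma \ref{stab} depend only on the Dynkin diagram and the position of the omitted node, and since system B is literally a type $\mathbf A_{n-1}$ diagram with simple roots $\alpha'_1,\dots,\alpha'_{n-1}$ and reflections $s'_1,\dots,s'_{n-1}$, the same proof applies verbatim with every unprimed symbol replaced by its primed counterpart.

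Concretely, I would first assert that $w_2$ admits the one-line description $w_2=([n-d+1,n]\,[1,n-d])$ with respect to the system-B ordering, i.e.\ $w_2$ sends the system-B index $i$ to the same value that $w_1$ sends the system-A index $i$. This is forced by the fact that both are the top element of $W^{P_d}$ (respectively $W^{P'_d}$) and both parabolic data omit the node in position $d$ of an $\mathbf A_{n-1}$ chain. Granting this, I would rerun the one-line-notation computation of Lemma \ref{stab}: using the transposition identity $s_{(i,j)}w=ws_{(k,l)}$ (valid for permutations, hence equally for the system-B indexing) together with $w_2(d+k)=k$ for $1\le k\le n-d-1$ and $w_2(\ell-(n-d))=\ell$ for $n+1-d\le\ell\le n-1$, I obtain $s'_k w_2=w_2 s'_{d+k}$ and $s'_\ell w_2=w_2 s'_{\ell-(n-d)}$ on the nose. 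Thus both assertions follow exactly as in the lemma.

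The single genuine subtlety is the bookkeeping of the two index conventions. The reflections $s'_j$ are defined through the explicit dictionary above, so before I can quote ``the same computation,'' I must confirm that the system-B simple roots really are linearly ordered as $\alpha'_1,\dots,\alpha'_{n-1}$ with adjacency matching the chain $(B)$, and that the permutation $w_2$ acts on these indices the way $w_1$ acts on the system-A indices. I expect this translation step to be the main (and only) obstacle: it is not deep, but one must check that the identification is a genuine isomorphism of Coxeter systems taking $s_i\mapsto s'_i$, so that the formulas $s'_k w_2=w_2 s'_{d+k}$ are not merely formally analogous but follow from Lemma \ref{stab} applied in the relabelled system. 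Once that identification is in hand, the corollary is immediate, which is precisely why the text flags it as a ``straightforward consequence.''

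Alternatively, and perhaps more transparently, one can avoid re-deriving the one-line notation for $w_2$ altogether: since the claim is that Corollary \ref{sysb'} is Lemma \ref{stab} transported along the diagram isomorphism $\sigma\colon s_i\mapsto s'_i$, it suffices to observe that $\sigma$ is an automorphism of the abstract Coxeter group $\widehat W$ sending $w_1$ to $w_2$ (both being the distinguished maximal representatives), and then apply $\sigma$ to each of the two relations in Lemma \ref{stab}. I would present the proof in this short form, remarking that the conjugation relations are preserved under $\sigma$ because they are phrased purely in terms of the generators and their products, and then simply record the resulting primed identities as the two assertions of the corollary.
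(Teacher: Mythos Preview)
Your proposal is correct and is exactly the argument the paper intends: the text gives no proof beyond the remark ``as a straightforward consequence, we have similar results for system B,'' and your transport of Lemma~\ref{stab} along the relabelling $s_i\mapsto s'_i$ (carrying $w_1$ to $w_2$, since both are the top elements of $W^{P_d}$ and $W^{P'_d}$ for the node in position $d$ of an $\mathbf A_{n-1}$ chain) is precisely what that remark means. One minor wording point: rather than calling $\sigma$ an automorphism of $\widehat W$, it is cleaner to say it is an isomorphism of Coxeter systems between the two finite type-$\mathbf A_{n-1}$ subgroups $\langle s_1,\dots,s_{n-1}\rangle$ and $\langle s'_1,\dots,s'_{n-1}\rangle$ of $\widehat W$; this already suffices, since Lemma~\ref{stab} is a statement internal to $S_n$.
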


 \ni(here, $w_2$ is as in \S \ref{elts}).

 Using Lemma \ref{stab}, Corollary \ref{sysb'}, \S \ref{Red},
 and \S \ref{Min},
  we shall now show the $G_0$-stability for the Schubert variety
$X_{\kappa_d}$ in
 $\mathcal{G}/\mathcal{Q}$ (for the action on the left by
 multiplication).
 \begin{lem}\label{rel}
For $1\le k\le n-1,k\ne n-d,d, s_k\kappa_d=\kappa_d s_k$.
 \end{lem}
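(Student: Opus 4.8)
The plan is to bypass the Weyl-group combinatorics and argue directly from the explicit matrix forms recorded above: $\kappa_d$ is the diagonal matrix $\mathrm{diag}([tI_d],[I_{n-2d}],[t^{-1}I_d])$, and the lift of $s_k$ given in \S\ref{gen} agrees with the identity matrix outside the two rows and columns indexed by $k$ and $k+1$, where it carries the block $\left(\begin{smallmatrix} 0 & 1\\ -1 & 0\end{smallmatrix}\right)$. The first step is to isolate a general commutation criterion: for any diagonal matrix $D=\mathrm{diag}(d_1,\dots,d_n)$ one has $Ds_k=s_kD$ if and only if $d_k=d_{k+1}$. This is a short computation — left multiplication by $D$ rescales the rows of $s_k$ and right multiplication rescales its columns, so the only entries that can change are those in positions $(k,k+1)$ and $(k+1,k)$; comparing them gives $d_k$ against $d_{k+1}$ (the common sign of the $s_k$-block being irrelevant), while every other entry is left untouched.

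The second step is to read off where this criterion succeeds for $D=\kappa_d$. Its diagonal equals $t$ in positions $1,\dots,d$, then $1$ in positions $d+1,\dots,n-d$, then $t^{-1}$ in positions $n-d+1,\dots,n$, so consecutive entries coincide, $(\kappa_d)_{kk}=(\kappa_d)_{k+1,k+1}$, for every index except at the two block boundaries $k=d$ (where $t\ne 1$) and $k=n-d$ (where $1\ne t^{-1}$). Hence for $1\le k\le n-1$ with $k\ne d,n-d$ the criterion yields $s_k\kappa_d=\kappa_d s_k$ as an identity of matrices, and therefore a fortiori in $\widehat W$, which is exactly the assertion.

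I do not expect a genuine obstacle here; the only points demanding care are the sign in the lift of $s_k$ (one should check that the antisymmetric $\pm 1$ does not spoil the clean criterion $d_k=d_{k+1}$, which it does not) and the identification of the two exceptional indices as precisely the block boundaries. For completeness I would add a remark giving the alternative, purely Weyl-theoretic derivation that the surrounding results are set up for: writing $\kappa_d=w_1w_2$, conjugate $s_k$ past $w_1$ by Lemma \ref{stab} to get $w_1s_{k'}$, translate $s_{k'}$ into system B through the dictionary $s'_{d-k}=s_k,\ s'_{n+d-\ell}=s_\ell$, push it past $w_2$ by Corollary \ref{sysb'}, and translate back; a short case check over the regions $k\le d-1$, $d+1\le k\le n-d-1$, and $k\ge n-d+1$ shows the composite returns $w_2s_k$, again forcing $k\ne d,n-d$. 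The only difficulty in that route is the index bookkeeping across the two labelings, which the matrix computation avoids altogether.
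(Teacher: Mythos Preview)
Your argument is correct and takes a genuinely different route from the paper. The paper stays entirely inside $\widehat W$: it writes $\kappa_d=w_1w_2$, uses Lemma \ref{stab} to push $s_k$ past $w_1$, translates the resulting reflection into system~B, uses Corollary \ref{sysb'} to push it past $w_2$, and translates back, with a case split (Case~1: $1\le k\le n-d-1$, subdivided at $k=d$; Case~2: $k\ge n+1-d$). Your main argument instead works with the explicit lifts in $\mathcal{G}$ recorded in \S\ref{gen} and \S\ref{elts}: since $\kappa_d$ is diagonal and the lift of $s_k$ is the identity outside the $\{k,k+1\}$ block, commutation reduces to $(\kappa_d)_{kk}=(\kappa_d)_{k+1,k+1}$, which fails exactly at the two block boundaries $k=d,\,n-d$. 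The matrix route is shorter and makes the excluded indices visibly inevitable, with no bookkeeping between systems A and B; the paper's route has the virtue of never leaving the Weyl group and of reusing Lemma \ref{stab}, which is in any case needed again in Proposition \ref{left} for the exceptional index $k=d$. Your closing sketch of the ``purely Weyl-theoretic derivation'' is in fact precisely the paper's proof, so you have anticipated it.
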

\begin{proof}
Recall that $\kappa_d=w_1w_2$. We may suppose that $d\le n-d$ (since
$SL(n/P_d\cong SL(n))/P_{n-d}$). We divide the proof into the
following two cases.

\ni\textbf{Case 1:} Let $1\le k\le n-d-1$

\ni We have (cf. Lemma \ref{elts}, (1)),

\ni $s_k\kappa_d=s_kw_1w_2=w_1s_{d+k}w_2$. Now, in view of \S
\ref{sysb} (with $\ell =d+k$), we have that
$s_{d+k}=s'_{n+d-(d+k)}=s'_{n-k}$, and hence
$s_{d+k}w_2=s'_{n-k}w_2$. Thus we get

 $$s_k\kappa_d=w_1s'_{n-k}w_2\leqno{(*)}$$

To compute $s'_{n-k}w_2$, we further divide this case into the
following two subcases:

\ni\textbf{Subcase 1(a):} Let $k\ge d+1$.

\ni This implies $n-k\le n-d-1$. Hence, in view of Corollary
\ref{sysb'}, we get that $s'_{n-k}w_2=w_2s'_{d+n-k}$. Now,
$d+n-k\ge d+1$ (since $k\le n-1$). Hence we obtain (in view of \S
\ref{elts}), $w_2s'_{d+n-k}=w_2s_k$. Thus $s'_{n-k}w_2=w_2s_k$;
substituting in (*), we get the required result.

\ni\textbf{Subcase 1(b):} Let $k\le d-1$.

\ni This implies that $n-k\ge n-d+1$. Hence in view of Corollary
\ref{sysb'}, we get that
$s'_{n-k}w_2=w_2s'_{n-k-(n-d)}=w_2s'_{d-k}$. Now the hypothesis
that $k\le d-1$ implies (cf. \S \ref{elts})) that $w_2s'_{d-k}
=w_2s_k$. Thus $s'_{n-k}w_2=w_2s_k$; substituting in (*), we get
the required result.

\ni\textbf{Case 2:} Let $k\ge n+1-d$.

We have, by Lemma \ref{stab}, (2), $s_kw_1=w_1s_{k-(n-d)}$. Now,
$k-(n-d)\le d-1$, since $k\le n-1$. Hence, we have (cf. \S
\ref{elts}), $s_{k-(n-d)}=s'_{d-(k-(n-d))}=s'_{n-k}$. Hence
$s_{k-(n-d)}w_2=s'_{n-k}w_2=w_2s'_{n-k+d}$ (cf. Corollary
\ref{sysb'}; note that $n-k\le d-1$, since $k\ge n+1-d$). Now,
$k\ge d+1$, since $k\ge n+1-d$ and $n-d\ge d$. Hence, by (cf. \S
\ref{elts}), we get that $s'_{n-k+d}=s_k$, and therefore,
$s_{k-(n-d)}w_2=w_2s_k$; substituting in (*), we get the required
result.
\end{proof}

\begin{prop}\label{left}
The Schubert variety $X(\kappa_d)$ in $\mathcal{G}/\mathcal{Q}$,
($\mathcal{Q}$ being the two-step parabolic subgroup of
$\mathcal{G}$, corresponding to omitting $\alpha_0,\alpha_d$) is
stable for multiplication on the left by $G_0$.
\end{prop}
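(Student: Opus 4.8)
The plan is to deduce the $G_0$-stability of $X(\kappa_d)$ from the combinatorial stability criterion recorded as Fact 4, applied to the parahoric subgroup $Q=G_0$ (legitimate in the Kac--Moody setting by the Remark following the Facts). First I would record that $G_0=G(A^+)$ is the maximal parahoric whose Weyl group is the finite $W=\langle s_1,\dots,s_{n-1}\rangle$, i.e.\ the one ``omitting $\alpha_0$''; hence its set of simple roots is $S_{G_0}=\{\alpha_1,\dots,\alpha_{n-1}\}$, and by the general form of Fact 4 (stability under $Q$ holds precisely when $s_\alpha w\le w\,(\mathrm{mod}\ W_{\mathcal Q})$ for every $\alpha\in S_Q$) it suffices to verify $s_k\kappa_d\le\kappa_d\ (\mathrm{mod}\ W_{\mathcal Q})$ for $1\le k\le n-1$. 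Throughout I use that $\kappa_d\in{\widehat W}^{\mathcal Q}$ (Lemma \ref{min}), so $\kappa_d$ is the minimal representative indexing $X(\kappa_d)$.

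For the generic indices $k\ne d,\,n-d$ I would simply invoke Lemma \ref{rel}, which gives $s_k\kappa_d=\kappa_d s_k$; since $k\ne 0,d$ we have $s_k\in W_{\mathcal Q}$, so $s_k\kappa_d$ and $\kappa_d$ lie in the \emph{same} coset of $W_{\mathcal Q}$. Hence $\mathcal B s_k\kappa_d\mathcal Q=\mathcal B\kappa_d\mathcal Q$ and the required inequality holds trivially, with left multiplication by $P_{\alpha_k}$ preserving the cell. This settles everything except the two boundary values $k=d$ and $k=n-d$, which are exactly the indices excluded from Lemma \ref{rel}, and which I expect to be the heart of the matter.

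For $k=d$ and $k=n-d$ I would argue by a direct computation on affine roots, using the explicit form $\kappa_d=\mathrm{diag}([tI_d],[I_{n-2d}],[t^{-1}I_d])$, i.e.\ the translation in ${\widehat W}$ by the coweight $\lambda=(1^d,0^{\,n-2d},(-1)^d)$. Applying the translation action to a finite simple root gives $\kappa_d^{-1}(\alpha_k)=\alpha_k-\langle\lambda,\alpha_k\rangle\delta$, where $\langle\lambda,\alpha_k\rangle=\lambda_k-\lambda_{k+1}$; this pairing vanishes for every $k\ne d,n-d$ but equals $1$ (resp.\ $2$, in the degenerate case $n=2d$ where $d=n-d$) at the boundary indices, precisely because $\lambda$ steps down there. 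Thus $\kappa_d^{-1}(\alpha_d)=\alpha_d-\delta$ and $\kappa_d^{-1}(\alpha_{n-d})=\alpha_{n-d}-\delta$ are \emph{negative} real roots, so $\ell(s_d\kappa_d)<\ell(\kappa_d)$ and $\ell(s_{n-d}\kappa_d)<\ell(\kappa_d)$. Finally, since $\kappa_d\in{\widehat W}^{\mathcal Q}$ and these left multiplications decrease length, a standard minimal-representative argument applies: if $w\in{\widehat W}^{\mathcal Q}$ and $s_\alpha w<w$ then $w^{-1}(\alpha)<0$ cannot be a simple root of $\mathcal Q$, forcing $s_\alpha w\in{\widehat W}^{\mathcal Q}$. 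Hence $s_d\kappa_d,s_{n-d}\kappa_d\in{\widehat W}^{\mathcal Q}$, and the inequalities $s_d\kappa_d<\kappa_d$, $s_{n-d}\kappa_d<\kappa_d$ hold strictly modulo $W_{\mathcal Q}$.

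Combining the three cases yields $s_k\kappa_d\le\kappa_d\ (\mathrm{mod}\ W_{\mathcal Q})$ for all $1\le k\le n-1$, and the proposition follows. The step I expect to be the main obstacle is the boundary computation for $k=d,n-d$: one must pinpoint that these are exactly the two places where the coweight $\lambda$ drops, which is what makes $\kappa_d^{-1}(\alpha_k)$ acquire a $-\delta$ and become negative, thereby forcing the coset to strictly decrease. A minor but worthwhile point to state explicitly is that the ``equal coset'' outcome of the generic case is harmless for stability (the relevant cell is preserved), so the criterion of Fact 4 should here be read as the non-strict inequality.
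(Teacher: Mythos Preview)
Your proof is correct. The generic case $k\ne d,n-d$ is handled exactly as in the paper, via Lemma \ref{rel}. For the boundary indices $k=d,n-d$ you take a genuinely different route: you exploit that $\kappa_d$ is the translation by the coweight $\lambda=(1^d,0^{\,n-2d},(-1)^d)$ and compute $\kappa_d^{-1}(\alpha_k)=\alpha_k-\langle\lambda,\alpha_k\rangle\delta$ directly, observing that $\langle\lambda,\alpha_k\rangle>0$ precisely when $k\in\{d,n-d\}$, so that $\kappa_d^{-1}(\alpha_k)<0$ and hence $s_k\kappa_d<\kappa_d$. The paper instead stays with the reduced-expression combinatorics of \S\ref{elts}: for $k=n-d$ it notes that the reduced word $w_1w_2$ begins with $s_{n-d}$, and for $k=d$ it uses Lemma \ref{stab}(1) to pass $s_d$ across $w_1$ as $s_{2d}$ and then cancels against the leading reflection of $v_{d-1}$. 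Your translation argument is shorter and more conceptual, and it makes transparent \emph{why} exactly $d$ and $n-d$ are the exceptional indices (they are the drops of $\lambda$); the paper's argument, on the other hand, yields the stronger information that $s_k\kappa_d$ is obtained by deleting a specific reflection from the chosen reduced word. Your additional remark that $s_k\kappa_d$ remains in ${\widehat W}^{\mathcal Q}$ is correct and a pleasant bonus, though not needed: once $s_k\kappa_d<\kappa_d$ in ${\widehat W}$, the inequality $s_k\kappa_d\le\kappa_d\ (\mathrm{mod}\ W_{\mathcal Q})$ follows by projecting the Bruhat order.
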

\begin{proof}
We need to show that
$s_k\kappa_d\le\kappa_d(mod\,\mathcal{Q}),1\le k\le n-1$. For
$k=n-d$, this is clear, since in the reduced expression
(Proposition \ref{red}) for $\kappa_d=w_1w_2$, we have that the
reduced expression for $w_1$ starts with $s_{n-d}$ (cf. \S
\ref{elts}).

For $1\le k\le n-1,k\ne d, n-d$, we have, by Lemma \ref{rel},
$s_k\kappa_d=\kappa_d s_k$, and the result follows (since, for
$1\le k\le n-1,k\ne d, s_k\in W_{\mathcal{Q}}$).

Let now $k=d$. If $d=n-d$, then as above, we have that the reduced
expression for $w_1$ starts with $s_{d}$ (cf. \S \ref{elts}) and
the result follows. Let then $d\le n-d-1$. We have, by Lemma
\ref{stab}, (1), $s_dw_1=w_1s_{2d}$. Now

\ni $s_{2d}w_2=s_{2d}v_{d-1}v_{d-2}\cdots v_1v_0$,

\ni where $v_k,0\le k\le d-1$ has the reduced expression

$v_k=s_{d+k+1}s_{d+k+2}\cdots s_{n-1}s_0s_1s_2\cdots s_k$ (cf. \S
\ref{elts}). In particular, $v_{d-1}$ begins with $s_{2d}$. Hence,
we obtain $s_dw_1w_2=w_1s_{2d}w_2<w_1w_2$, and the result follows.
\end{proof}

\section{The main Lemma}\label{main} In this section, we prove one
crucial result involving $\kappa_d$ which will be used for proving
the main result (namely, Theorem \ref{ctgt}).
\begin{lemma}\label{crucial}
Let $Y=\sum_{1\le i\le d<j\le n}\, a_{ij}E_{ij}, E_{ij} $ being
the elementary $n\times n$ matrix with $1$ at the $(i,j)$-th place
and 0's elsewhere. Let ${\underline{Y}}=Id_{n\times n}+\sum_{1\le
i\le n-1}\,t^{-i}Y^i$ (note that $Y^n=0$).  There exist $g\in G_0,
h\in\mathcal{Q}$ such that $g\kappa_d = {\underline{Y}}h $ (recall
that $\mathcal{Q}$ is the two-step parabolic subgroup of
${\widehat{SL_n}}$, corresponding to omitting
$\alpha_0,\alpha_d$).
\end{lemma}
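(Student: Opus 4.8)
The plan is to exhibit explicit matrices $g \in G_0$ and $h \in \mathcal{Q}$ realizing the factorization $g\kappa_d = \underline{Y}h$. Recall $\kappa_d = \mathrm{diag}([tI_d],[I_{n-2d}],[t^{-1}I_d])$, so multiplying by $\kappa_d$ on the right scales the first $d$ columns by $t$ and the last $d$ columns by $t^{-1}$. Since $Y = \sum_{1\le i\le d<j\le n} a_{ij}E_{ij}$ has its nonzero entries confined to the upper-right block (rows $1,\dots,d$, columns $d+1,\dots,n$), the matrix $Y$ satisfies $Y^2 = 0$ whenever the index ranges force composition through the forbidden region; but in general one should just use $Y^n=0$ and write $\underline{Y} = I + \sum_{1\le i\le n-1} t^{-i}Y^i$. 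First I would compute $\underline{Y}\,\kappa_d^{-1}$ (equivalently, rearrange the target equation to $g = \underline{Y}h\kappa_d^{-1}$) and check directly that by choosing $h$ to cancel the negative powers of $t$, the resulting $g$ lies in $G_0 = G(A^+) = SL_n(K[[t]])$.

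The key computational step is to understand the entries of $\underline{Y}$. Writing $Y$ in block form with the $(1,2)$ upper-right block (call it $B$, a $d\times(n-d)$ matrix) being the only nonzero block, I expect $Y^i$ to vanish for $i\ge 2$ unless $B$ has a compatible staircase structure — so I would first verify how high the powers $Y^i$ actually run, since this controls which negative powers $t^{-i}$ appear in $\underline{Y}$. Then I would conjugate/multiply by $\kappa_d$: the factor $t^{-i}Y^i$, after right multiplication by $\kappa_d$, has its column-scaling interact with the $t^{-i}$ to produce entries whose $t$-order I must track block by block. The natural candidate for $h$ is an element of $\mathcal{Q} = (\pi^+)^{-1}$ of the parabolic omitting $\alpha_0,\alpha_d$, chosen as an upper-triangular-in-blocks element over $A^+$ together with possibly a block-permutation representative, designed precisely so that $h$ absorbs all the strictly negative $t$-powers and renders $g$ regular at $t=0$.

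The main obstacle I anticipate is the bookkeeping of $t$-orders across the three diagonal blocks of $\kappa_d$ after multiplying by $\underline{Y}$: the entries coming from $t^{-i}Y^i$ in the upper-right $(d)\times(d)$ corner get multiplied by $t^{-1}$ (from the last $d$ columns of $\kappa_d$), pushing them to order $t^{-i-1}$, which looks dangerous, while entries landing in the first $d$ columns get multiplied by $t$ and are safely regularized. I would need to show that the genuinely negative-order terms all factor through $\mathcal{Q}$ — that is, they can be collected into a single right factor $h$ whose image under $\pi^+$ lies in the Levi/parabolic omitting $\alpha_0$ and $\alpha_d$. Concretely, I would arrange $g = \underline{Y}\,\kappa_d\,h^{-1}$ and verify two things: that $\det g = 1$ (automatic since $\det\underline{Y}=1$, $\det\kappa_d=1$, and $h\in\mathcal{Q}\subset SL_n(F)$), and that every entry of $g$ lies in $A^+=K[[t]]$ with no pole at $t=0$.

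Finally, once the candidate $h$ is written down explicitly (most naturally as a block-unipotent element times a suitable lift adapted to omitting $\alpha_0,\alpha_d$), the verification reduces to a finite matrix identity over $F$ that can be checked block by block; the membership $g\in G_0$ then follows from the vanishing of all negative $t$-powers, and $h\in\mathcal{Q}$ follows from its explicit block-parabolic form. I expect the cleanest route is to guess $h$ from the requirement that $\underline{Y}h$ reproduce the column-scaling of $\kappa_d$ up to something in $G_0$, rather than solving for $g$ first.
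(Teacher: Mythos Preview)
Your overall strategy---an explicit block-matrix computation tracking $t$-orders---matches the paper's in spirit, but you run it in the opposite direction. The paper treats $g\in G_0$ as the free variable, sets $h=\underline{Y}^{-1}g\kappa_d=(I-t^{-1}Y)\,g\,\kappa_d$, and then imposes the membership $h\in\mathcal{Q}$ (equivalently, $h\in SL_n(K[[t]])$ with $h(0)\in P_d$) column by column; this reduces to a small linear homogeneous system on the low-order Taylor coefficients of the entries of $g$, with strictly more unknowns than equations. You instead propose to guess $h\in\mathcal{Q}$ and verify $g=\underline{Y}\,h\,\kappa_d^{-1}\in G_0$. Both directions are in principle viable, but the paper's is cleaner: $G_0=SL_n(K[[t]])$ is easier to parametrize freely (no parabolic condition at $t=0$), and right-multiplication by $\kappa_d$ scales the first $d$ columns by $t$, which automatically delivers Condition~1 (order $>0$ in the lower-left block) and leaves only the last $d$ columns requiring genuine work.

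There is also a concrete oversight in your plan. Since every nonzero entry of $Y$ lies in a row $\le d$ and a column $>d$, the image of $Y$ is contained in its kernel, so $Y^2=0$ \emph{unconditionally}; hence $\underline{Y}=I+t^{-1}Y$ and $\underline{Y}^{-1}=I-t^{-1}Y$ exactly. The ``dangerous'' terms $t^{-i-1}$ with $i\ge 2$ that you anticipate never arise, and the worst pole you actually face is $t^{-2}$ in the upper-right $d\times d$ block. Without this observation your bookkeeping is more complicated than necessary, and your proposal to ``guess $h$ as a block-unipotent element'' remains too vague to constitute a proof: you have not exhibited any candidate $h$, nor argued why one exists.
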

\begin{proof}
We shall now show that a choice of $g_{ij}, 1\le i,j\le n$, and
$h_{ij}, 1\le i,j\le n $ exists so that $h\in\mathcal{Q}$, and
$g\kappa_d = {\underline{Y}}h $. We have, ${\underline{Y}}^{-1}=
Id_{n}-t^{-1}Y$.  Set
$$h=(Id_{n}-t^{-1}{\underline{Y}})g\kappa_d\leqno{(*)}$$ We have
(by definition of $\kappa_d$ (\S\ref{elts}))
$$\kappa_d=diag([tId_{d}],[Id_{n-2d}],[t^{-1}Id_{d}])$$
Note that since we want $h$ to belong to $\mathcal{Q}$, the
condition on $h$ is that $h(0)$ should belong to $P_d$. Hence,
$h_{ij}$'s should satisfy the following conditions:

\ni\textbf{Condition 1:} $h_{ij},j\le d<i\le n$ should have order
$>0$ (as an element of $K[[t]]$)

\ni\textbf{Condition 2:} The remaining $h_{ij}$'s should have
order $\ge 0$. Now using (*), we describe below the columns of
$h$:
$$
\begin{pmatrix}
h_{1j}\\
\vdots\\
h_{dj}\\
h_{d+1j}\\
\vdots\\
h_{nj}
\end{pmatrix}=
\begin{pmatrix}
tg_{1j}-\sum_{d+1\le i\le n}a_{1i}g_{ij}\\
 \vdots\\
 tg_{dj}-\sum_{d+1\le i\le n}a_{di}g_{ij}\\
 tg_{d+1j}\\
 \vdots\\
 tg_{nj}
\end{pmatrix},\ 1\le j\le d\leqno{(I)}$$
$$\begin{pmatrix}
h_{1j}\\
\vdots\\
h_{dj}\\
h_{d+1j}\\
\vdots\\
h_{nj}
\end{pmatrix}=
\begin{pmatrix}
g_{1j}-\sum_{d+1\le i\le n}t^{-1}a_{1i}g_{ij}\\
 \vdots\\
 g_{dj}-\sum_{d+1\le i\le n}t^{-1}a_{di}g_{ij}\\
 g_{d+1j}\\
 \vdots\\
 g_{nj}
\end{pmatrix},\ d+1\le j\le n-d\leqno{(II)}$$

\vskip.5cm
$$\begin{pmatrix}
h_{1j}\\
\vdots\\
h_{dj}\\
h_{d+1j}\\
\vdots\\
h_{nj}
\end{pmatrix}=
\begin{pmatrix}
t^{-1}g_{1j}-\sum_{d+1\le i\le n}t^{-2}a_{1i}g_{ij}\\
 \vdots\\
 t^{-1}g_{dj}-\sum_{d+1\le i\le n}t^{-2}a_{di}g_{ij}\\
 t^{-1}g_{d+1j}\\
 \vdots\\
 t^{-1}g_{nj}
\end{pmatrix},\ n+1-d\le j\le n\leqno{(III)}
$$ Condition 1 follows from (I). Also, from (I) we get that Condition
2 holds for $h_{ij}, 1\le i,j\le d$. Thus the entries in the first
$d$ columns of $h$ satisfy the required conditions.

\ni From (II), we get that Condition 2 holds for $h_{ij}, d+1\le
i\le n, d+1\le j\le n-d$. Regarding the entries $h_{ij}, 1\le i\le
d, d+1\le j\le n-d$, we shall choose $g_{ij}, d+1\le i\le n,
d+1\le j\le n-d$ so that order$\,g_{ij}>0$. Thus with this choice,
we have that the entries in the  $j$-th column, $d+1\le j\le n-d$,
of $h$ satisfy the required conditions.

\ni In view of (III), we shall choose $g_{ij}, d+1\le i\le n,
n-d+1\le j\le n$ so that order$\,g_{ij}=1$. With this choice, we
obtain that $h_{ij},d+1\le i\le n, n-d+1\le j\le n$ satisfy
Condition 2. In order to have $h_{ij},1\le i\le d, n-d+1\le j\le
n$ satisfy Condition 2, we choose order$\,g_{ij}=0,1\le i\le d,
n-d+1\le j\le n$, and impose the following conditions. We write
$g_{ij}=\sum\,g_{ij}^{(r)}t^r$. Then the conditions are
$$g_{ij}^{(0)}-\sum_{d+1\le m\le n}a_{im}g_{mj}^{(1)}=0,1\le i\le d,
n-d+1\le j\le n\leqno{(**)}$$ Treating $g_{ij}^{(0)},1\le i\le
d,g_{mj}^{(1)},d+1\le m\le n,n-d+1\le j\le n$, (**) is a linear
homogeneous system of $d^2$ equations in $nd$ variables, and hence
there exist non-trivial solutions (note that $nd>d^2$, since $d\le
n-1$). Hence, we can choose $g_{ij},1\le i\le d,g_{mj},d+1\le m\le
n,n-d+1\le j\le n$ so that all of the entries in $j$-th column,
$n-d+1\le j\le n$ satisfy Condition 2.

This completes the proof of the Lemma.
\end{proof}
\section{Cotangent bundle}\label{bundle} The cotangent bundle
$T^*G/P_d$ is the vector bundle over $G/P_d$, the fiber at any
point $x\in G/P_d$ being the cotangent space to $G/P_d$ at $x$;
the dimension of $T^*G/P_d$ equals $2\,$dim\,$G/P_d=2d(n-d)$.
Also, $T^*G/P_d$ is the fiber bundle over $G/P_d$ associated to
the principal $P_d$-bundle $G\rightarrow G/P_d$, for the Adjoint
action of $P_d$ on $\,u({P_d})$ (=Lie($\,U({P_d})$), Lie algebra
of $U({P_d})$, $U({P_d})$ being the unipotent radical of $P_d$)).
Thus
$$T^*G/P_d=G\times^{P_d} u({P_d})=G\times u({P_d})/
\sim$$ where the equivalence relation $\sim$ is given by $(g,
Y)\sim(gx, x^{-1}Yx),g\in G,Y\in u({P_d}), x\in P_d$ .

\subsection{Embedding of $T^*G/P_d$ inside
$\mathcal{G}/\mathcal{Q}$} Define $\phi:G\times^{P_d}
u(P_d)\rightarrow \mathcal{G}/\mathcal{Q}$ as
$$\phi(g,Y)=g(Id+t^{-1}Y+t^{-2}Y^2+\cdots)(mod\,\mathcal{Q}),
g\in G, Y\in{u(P_d)}$$ Note that the sum on the right hand side is
finite (since $Y$ is nilpotent). In the sequel, we shall denote
$${\underline{Y}}:=Id+t^{-1}Y+t^{-2}Y^2+\cdots$$ We shall now list
some facts on the map $\phi$:

\ni\textbf{(i) $\phi$ is well-defined:} Let $g\in G,x\in P_d, Y\in
u(P_d)$. We have,

 \ni $ \phi((gx, x^{-1}Yx))$

\ni $=gx(Id+t^{-1}x^{-1}Yx+t^{-2}x^{-1}Y^2x+\cdots)
(mod\,\mathcal{Q})$

\ni $ =g(x+t^{-1}Yx+t^{-2}Y^2x+\cdots) (mod\,\mathcal{Q})$

\ni $\equiv g(Id+t^{-1}Y+t^{-2}Y^2+\cdots)(mod\,\mathcal{Q})$

\ni $ =\phi(g,Y) $

\ni\textbf{(ii) $\phi$ is injective:} Let
$\phi((g_1,Y_1))=\phi((g_2,Y_2))$. This implies that
$g_1{\underline{Y_1}}\equiv
g_2{\underline{Y_2}}(mod\,\mathcal{Q})$, where recall that for
$Y\in{u(P_d)},{\underline{Y}}=Id+t^{-1}Y+t^{-2}Y^2+\cdots$. Hence,
$g_1{\underline{Y_1}}= g_2{\underline{Y_2}}x$, for some
$x\in\mathcal{Q}$. Denoting $h=:g_2^{-1}g_1$, we have,
$h{\underline{Y_1}}={\underline{Y_2}}x$, and
therefore,$$x={\underline{Y_2}}^{-1}h{\underline{Y_1}}=
{\underline{Y_2}}^{-1}(h{\underline{Y_1}}
h^{-1})h={\underline{Y_2}}^{-1}{\underline{Y'_1}}h$$ where
${\underline{Y'_1}}=h{\underline{Y_1}} h^{-1}$. Hence
$$xh^{-1}={\underline{Y_2}}^{-1}{\underline{Y'_1}}=(Id-t^{-1}Y_2)
(Id+t^{-1}hY_1h^{-1}+t^{-2}hY^2_1h^{-1}+\cdots)$$ Now, left hand
side is integral (since, $x\in\mathcal{Q},h(=g_2^{-1}g_1)\in G$).
Hence both sides equal $Id$. This implies
$${\underline{Y_2}}={\underline{Y'_1}},\ x=h$$ The fact that $x=h$
together with the facts that $x\in\mathcal{Q},h\in G$ implies that
$$h\in \mathcal{Q}\cap G(=P_d)\leqno{(*)}$$ Further, the fact that
${\underline{Y_2}}={\underline{Y'_1}}$ implies that
${\underline{Y_1}}=h^{-1}{\underline{Y_2}}h$ (note from above that
${\underline{Y'_1}}=h{\underline{Y_1}} h^{-1}$). Hence
$$Id+t^{-1}Y_1+t^{-2}Y_1^2+\cdots=Id+t^{-1}h^{-1}Y_2h+t^{-2}h^{-1}Y^2_2h
+\cdots$$ From this it follows that $$Y_1=h^{-1}Y_2h\leqno{(**)}$$
Now (*), (**) together with the fact that $h=g_2^{-1}g_1$ imply
that $$(g_1,Y_1)=(g_2h,h^{-1}Y_2h)\sim (g_2,Y_2)$$ From this
injectivity of $\phi$ follows.

\ni\textbf{(iii) $G$-equivariance:} $\phi$ is $G$-equivariant
(clearly).

\begin{thm}\label{ctgt} The map $\theta$ identifies
${\overline T^*G/P_d}$ ( the closure being in
$\mathcal{G}/\mathcal{Q}$) with the affine Schubert variety
$X(\kappa_d)$.
\end{thm}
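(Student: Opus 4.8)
The plan is to prove the stated identification by establishing both a set-theoretic equality of points and then upgrading it to an isomorphism of varieties. The map is called $\theta$ in the statement but built from $\phi$ in the preceding discussion; I will treat $\theta$ as the morphism on the closure $\overline{T^*G/P_d}$ induced by $\phi$, which is already shown to be a well-defined, injective, $G$-equivariant map from $T^*G/P_d = G\times^{P_d}u(P_d)$ into $\mathcal{G}/\mathcal{Q}$. The first key step is to show that the image $\phi(T^*G/P_d)$ lands inside $X(\kappa_d)$ and is in fact a dense open subset of it. Lemma \ref{crucial} is exactly the engine for this: for $Y\in u(P_d)$, writing $Y=\sum a_{ij}E_{ij}$ with $1\le i\le d<j\le n$, it produces $g\in G_0$ and $h\in\mathcal{Q}$ with $g\kappa_d = \underline{Y}h$, so that $\underline{Y}\,(\mathrm{mod}\,\mathcal{Q}) = g\kappa_d\,(\mathrm{mod}\,\mathcal{Q})$ lies in the $G_0$-orbit of $\kappa_d\,(\mathrm{mod}\,\mathcal{Q})$. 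Combined with Proposition \ref{left} ($X(\kappa_d)$ is stable under left multiplication by $G_0$), this shows $\underline{Y}\,(\mathrm{mod}\,\mathcal{Q})\in X(\kappa_d)$ for every $Y$, hence $\phi(Id,Y)\in X(\kappa_d)$; applying $G$-equivariance and $G\subset G_0$ gives $\phi(g,Y)\in X(\kappa_d)$ for all $(g,Y)$.

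The second step is a dimension count together with irreducibility to force density. By Corollary \ref{dim} we have $\dim X(\kappa_d)=2d(n-d)=\dim T^*G/P_d$, and $X(\kappa_d)$ is irreducible (affine Schubert varieties are irreducible). Since $\phi$ is an injective morphism from an irreducible variety of dimension $2d(n-d)$ into the irreducible variety $X(\kappa_d)$ of the same dimension, the image is constructible, irreducible, and of full dimension, hence dense in $X(\kappa_d)$. Taking closures, $\overline{\phi(T^*G/P_d)} = X(\kappa_d)$, which gives the set-theoretic content of the theorem: $\theta$ maps $\overline{T^*G/P_d}$ onto $X(\kappa_d)$. The remaining step is to promote this to the asserted identification (isomorphism). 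Here one argues that $\phi$ is an open immersion onto its image: the image is the big cell $\mathcal{B}^-\kappa_d\mathcal{Q}/\mathcal{Q}$ intersected appropriately, and on this cell $\phi$ has an explicit inverse reading off $g$ and $Y$ from the matrix entries, as in the injectivity computation in fact (ii). Because $X(\kappa_d)$ is normal (affine Schubert varieties in Kac--Moody flag varieties are normal) and $\phi$ is a bijective birational morphism onto it from the smooth variety $T^*G/P_d$ over the open dense image, Zariski's main theorem identifies $\overline{T^*G/P_d}$ with $X(\kappa_d)$.

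Let me restate the logical skeleton cleanly. First, $\phi(T^*G/P_d)\subseteq X(\kappa_d)$ via Lemma \ref{crucial} and Proposition \ref{left}. Second, $\dim\phi(T^*G/P_d)=2d(n-d)=\dim X(\kappa_d)$ with $X(\kappa_d)$ irreducible, so $\phi(T^*G/P_d)$ is dense and $\theta(\overline{T^*G/P_d})=X(\kappa_d)$. Third, using injectivity (fact (ii)) and the explicit local inverse, $\phi$ is an isomorphism of $T^*G/P_d$ onto a dense open subset of $X(\kappa_d)$; since both the cotangent bundle and the Schubert variety are the natural closures of this common open set, $\theta$ is the claimed identification. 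The main obstacle I anticipate is the third step rather than the first two: showing that $\theta$ is genuinely an isomorphism of varieties (not merely a continuous bijection) on the closure requires identifying the image of $\phi$ with an explicit affine open chart of $X(\kappa_d)$ and checking that the coordinate functions $g_{ij}, a_{ij}$ are recovered algebraically from the entries of the matrix representing a point of $X(\kappa_d)$. Controlling exactly which $\mathcal{Q}$-coset representatives arise, and verifying that no extra boundary identifications collapse the cotangent directions, is where the care lies; the normality of $X(\kappa_d)$ and the explicit matrix form of $\kappa_d=\mathrm{diag}([tI_d],[I_{n-2d}],[t^{-1}I_d])$ are the tools that make this tractable.
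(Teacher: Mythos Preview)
Your first two steps are exactly the paper's proof: use Lemma~\ref{crucial} (together with the $G_0$-stability of $X(\kappa_d)$ from Proposition~\ref{left}) to get $\phi(T^*G/P_d)\subseteq X(\kappa_d)$, then invoke $\dim X(\kappa_d)=2d(n-d)$ from Corollary~\ref{dim} and irreducibility of the Schubert variety to conclude $\overline{\phi(T^*G/P_d)}=X(\kappa_d)$.

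Your third step, however, is unnecessary, and the paper does not attempt it. The closure $\overline{T^*G/P_d}$ in the statement is, by the parenthetical ``the closure being in $\mathcal{G}/\mathcal{Q}$'', simply the closure of the image $\phi(T^*G/P_d)$ as a subset of $\mathcal{G}/\mathcal{Q}$; there is no separate abstract compactification to be matched up via an isomorphism. Thus ``identifies with $X(\kappa_d)$'' means equality of closed subvarieties of $\mathcal{G}/\mathcal{Q}$, which is precisely what the inclusion plus dimension argument already gives. The concerns you raise about open immersions, normality, and Zariski's Main Theorem would only be relevant if one wanted to show, in addition, that $\phi$ is an isomorphism of $T^*G/P_d$ onto an open subset of $X(\kappa_d)$; that is a natural further question, but it is not part of what Theorem~\ref{ctgt} asserts or proves.
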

\begin{proof} Let $(g_0,Y), g_0\in G, Y\in{u(P_d)}$. Then $Y$ is
of the form: $$Y=\sum_{1\le i\le d<j\le n}\, a_{ij}E_{ij}, E_{ij}
$$ Now $\theta(g_0,
Y)=g_0(Id+t^{-1}Y+t^{-2}Y^2+\cdots)(mod\,\mathcal{Q})=
g_0{\underline{Y}}(mod\,\mathcal{Q})$, where
${\underline{Y}}=Id+t^{-1}Y+t^{-2}Y^2+\cdots$. Then Lemma
\ref{crucial} implies that there exist $g\in G_0, h\in\mathcal{Q}$
such that $g\kappa_d = {\underline{Y}}h $. Hence ${\underline{Y}}$
belongs to $X(\kappa_d)$; hence $g_0{\underline{Y}}$ is also in
$X(\kappa_d)$ (since $g_0$ is clearly in $G_0$). Hence
$T^*G/P_d\subset X(\kappa_d)$, and therefore
${\overline{T^*G/P_d}}\subseteq X(\kappa_d)$. Now by dimension
considerations, we obtain that ${\overline{T^*G/P_d}}=
X(\kappa_d)$ (note (cf. Corollary \ref{dim}),
dim$\,X(\kappa_d)=2d(n-d)$)
\end{proof}

\end{document}